\newtheorem{theorem}{Theorem}[section]
\newtheorem{lemma}[theorem]{Lemma}
\theoremstyle{definition}
\newtheorem{definition}[theorem]{Definition}
\newtheorem{question}{Question}
\theoremstyle{remark}
\numberwithin{equation}{section}
\begin{document}

\bibliographystyle{alpha}
\title[Composite Knots with symmetric union presentations]{A Short Proof of A Theorem of Tanaka  on Composite Knots with symmetric union presentations}

\author{Feride Ceren Kose}

\address{Department of Mathematics, University of Texas at Austin}

\email{fkose@math.utexas.edu}

\begin{abstract}
We present a short proof of a theorem of Tanaka that if a composite ribbon knot admits a symmetric union presentation with one twisting region, then it has a non-trivial knot and its mirror image as connected summands.
\end{abstract}
\maketitle

\section{Introduction}
A symmetric union of $J$ is a knot diagram obtained by modifying a diagram of $J \# -J$ by inserting crossings along the axis of mirror symmetry, where $-J$ is the mirror image of $J$. The construction was first introduced by Kinoshita and Terasaka \cite{KT57} in which the crossings are inserted in a single region, then was generalized to multiple regions by Lamm \cite{La00}. A knot which admits such a diagram is called a $\textit{symmetric union}$. 

Symmetric unions are ribbon which is evident by the symmetric ribbon disk in their symmetric union diagrams. However, the converse whether all ribbon knots are symmetric unions is still unknown.

\begin{question}{\label{Q1}}
Does every ribbon knot admit a symmetric union presentation?
\end{question}

This is the case for the 21 prime ribbon knots with 10 or fewer crossings \cite{La00} and for all 2-bridge knots \cite{La06}. A search performed by Seeliger \cite{Se14} showed that out of the 137 prime ribbon knots with 11 and 12 crossings 122 knots admit symmetric union presentations. In \cite{La17}, Lamm constructs four non-symmetric ribbons that generate those for which a symmetric union presentation was not found. Besides these 15 prime knots, they also generate a family of composite ribbon knots which consist of a trefoil and a knot concordant to the mirror of the trefoil. Lamm conjectures that they are non-symmetric. This family motivates our study and we prove the following.

\begin{theorem}{\label{Theorem1.1}}(\cite{Ta19})
If $K$ is a composite ribbon knot that admits a symmetric union presentation with one twisting region, then $K = K_1 \# -K_1 \# K_2$ where $K_1$ is a non-trivial knot and $K_2$ is a symmetric union with one twisting region.
\end{theorem}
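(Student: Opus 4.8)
The plan is to work directly with the symmetric union diagram, reducing the compositeness of $K$ to that of its partial knot and using the mirror symmetry of the two halves to pair up the prime factors. Fix a symmetric union presentation of $K$ with partial knot $J$, vertical axis $\alpha$, and a single twisting region $T$ on $\alpha$. Away from $T$ the diagram is invariant under the reflection across $\alpha$, and since planar reflection of a diagram produces the mirror image, this exhibits the left half as (a diagram of) $J$ and the right half as $-J$. Concretely, the reflection $R\colon(x,y,z)\mapsto(-x,y,z)$ of $S^3$ across the sphere $\Sigma=\{x=0\}$ is orientation-reversing, interchanges the two halves, and carries $K$ to itself outside a ball $N(T)$ around the twisting region. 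I would organize the whole argument around decomposing $2$-spheres: $K$ composite provides an essential sphere $S$ with $\abs{S\cap K}=2$, and each such sphere must be understood relative to $T$ and $\Sigma$.

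The clean structural step is a \emph{peeling} procedure. Suppose a decomposing sphere $S$ can be isotoped off $N(T)$. If $S$ lies on one side of $\Sigma$, it cuts off a summand $P$ from the $J$-side; because $R$ preserves $K$ off $N(T)$ and sends that side to the other, $R(S)$ is a disjoint decomposing sphere cutting off the mirror $-P$, so $P\,\#\,{-P}$ splits off as a connected summand. If instead $S$ meets $\Sigma$, I would arrange it to be $R$-invariant, and then the $R$-invariant summand it bounds is a twist-free symmetric union, hence again of the form $Q\,\#\,{-Q}$. Removing such a pair leaves a smaller symmetric union of $J$ with one fewer prime factor in its partial knot, carrying the same twisting region $T$, so the procedure iterates. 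Collecting all peeled factors into $K_1$ gives $K=K_1\,\#\,{-K_1}\,\#\,K_2$, where $K_2$ is the indecomposable core still containing $T$, itself a symmetric union with one twisting region.

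The heart of the matter, and the step I expect to be the main obstacle, is that decomposing spheres really can be pushed off the twisting region: \textbf{Key Claim.} \emph{Every essential decomposing sphere of a symmetric union with one twisting region can be isotoped to be disjoint from $N(T)$.} This is precisely where having a \emph{single} twisting region is essential, since the locus on which the reflection symmetry fails is then one rational $2$-strand tangle; I would prove the claim by putting $S$ in minimal position with respect to $\Sigma$ and a meridional disk of $T$ and running innermost-circle and irreducibility arguments to remove all intersections with $N(T)$. Granting the Key Claim, the peeling continues until the core $K_2$ admits no decomposing sphere off $T$, which by the claim means $K_2$ has no essential decomposing sphere at all, so $K_2$ is prime or trivial. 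If no pair had peeled off, then $K=K_2$ would be prime or trivial, contradicting that $K$ is composite; hence at least one mirror pair splits off, $K_1$ is non-trivial, and $K=K_1\,\#\,{-K_1}\,\#\,K_2$ as desired. The degenerate case of a trivial twisting region is immediate, since then $K=J\,\#\,{-J}$ outright.
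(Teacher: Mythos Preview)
Your outline correctly identifies the architecture of the argument: once a decomposing sphere is known to miss the twisting region, the mirror symmetry forces a $K_1\#-K_1$ summand, and your two cases (sphere on one side of $\Sigma$, or $R$-invariant sphere) are exactly the dichotomy that appears in the paper's Lemma~\ref{L3.3}. But the entire content of the theorem is hidden in your Key Claim, and innermost-circle arguments will not establish it.

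Here is the obstruction. Pass to the double branched cover: the ball $N(T)$ lifts to a solid torus $\tilde B$, its exterior $X$ lifts to a manifold $\tilde X$ with torus boundary, and $\Sigma_2(K)=\tilde X(\tfrac{1}{n})$. By Waldhausen, $K$ composite is equivalent to $\Sigma_2(K)$ reducible. Your Key Claim then becomes: if $\tilde X(\tfrac{1}{n})$ is reducible, an essential sphere can be isotoped off the surgery solid torus, i.e.\ $\tilde X$ itself is reducible. This is \emph{false} for general irreducible $3$-manifolds with torus boundary, and no innermost-disk argument can rule it out: the circles of $S\cap\partial\tilde B$ may be meridians of the filling torus, bounding disks only in the filled-in side, which is precisely the configuration one cannot simplify by elementary surgery. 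Downstairs this corresponds to an innermost disk on $S$ lying inside $N(T)$ and meeting the two tangle strands once each; for a nonzero twist such a disk does not cap off to a trivial ball--arc pair, so the intersection cannot be removed.

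The paper gets around this with two substantial inputs. First, the mirror symmetry yields a \emph{second} reducible filling $\tilde X(-\tfrac{1}{n})$, since $(D\cup -D)(-n)$ is the mirror of $K$; then Gordon--Luecke's theorem forces $\Delta(\tfrac{1}{n},-\tfrac{1}{n})=2|n|\le 1$, a contradiction unless $\tilde X$ was already reducible. Second, to produce a sphere in $\tilde X$ that is simultaneously equivariant for the covering involution and for the lift of $R$, the paper invokes the equivariant sphere theorem for the $\mathbb{Z}_2\times\mathbb{Z}_2$-action; this is exactly what justifies your ``arrange $S$ to be $R$-invariant,'' which has no elementary substitute for spheres meeting $K$ in $S^3$. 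In short, your Key Claim is true, but it is a \emph{consequence} of the theorem rather than an independently accessible lemma, and its proof requires both Gordon--Luecke and the equivariant sphere theorem.
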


Clearly it follows that if $K$ is not the connected sum of a non-trivial knot and its mirror, then it also has a prime symmetric union summand. Theorem  \ref{Theorem1.1} gives a partial answer to Lamm's conjecture. In fact, it holds in general that the connected sum of two distinct prime knots, one of which is ribbon concordant to the mirror of the other, does not admit a symmetric union presentation with one twisting region.

Our proof relies on the characterization of composite knots in terms of the non-primeness of their double branched cover, which is a theorem of Waldhausen \cite{Wa69}, and a result of Gordon and Luecke \cite{GL96} on reducible fillings.

\section*{acknowledgement}
We are grateful to Cameron Gordon and John Luecke for their encouragement, support and many insightful discussions.

\section{Set Up}

Consider $S^3$ identified as $\mathbb{R}^3 \cup \{\infty\}$ where $\{\infty\}$ denotes the point at infinity. We assume a knot diagram is depicted in the $yz$-plane. In the presence of a symmetric union of $J$, we choose the $z$-axis to be the axis of mirror symmetry shown by the dashed line in the Figure 1. The isotopy type of a symmetric union of $J$ depends not only the knot $J$ but the location of the inserted crossings. Hence, we will refer to a diagram $D$ of $J$ when it is necessary. Let $\mathcal{S}$ denote the sphere $\{y=0\} \cup \{\infty\}$. The reflection through $\mathcal{S}$ is an orientation-reversing involution on $S^3$ and leaves the diagram $D \# -D$ unchanged. Let $\tau$ denote this involution. We call the balls bounded by $\mathcal{S}$ $\textit{the left ball}$ $B_l$ and $\textit{the right ball}$ $B_r$ according to the natural position of their projections in the $yz$-plane.

\begin{figure}[ht]
    \centering
    \includegraphics[height=3.35cm]{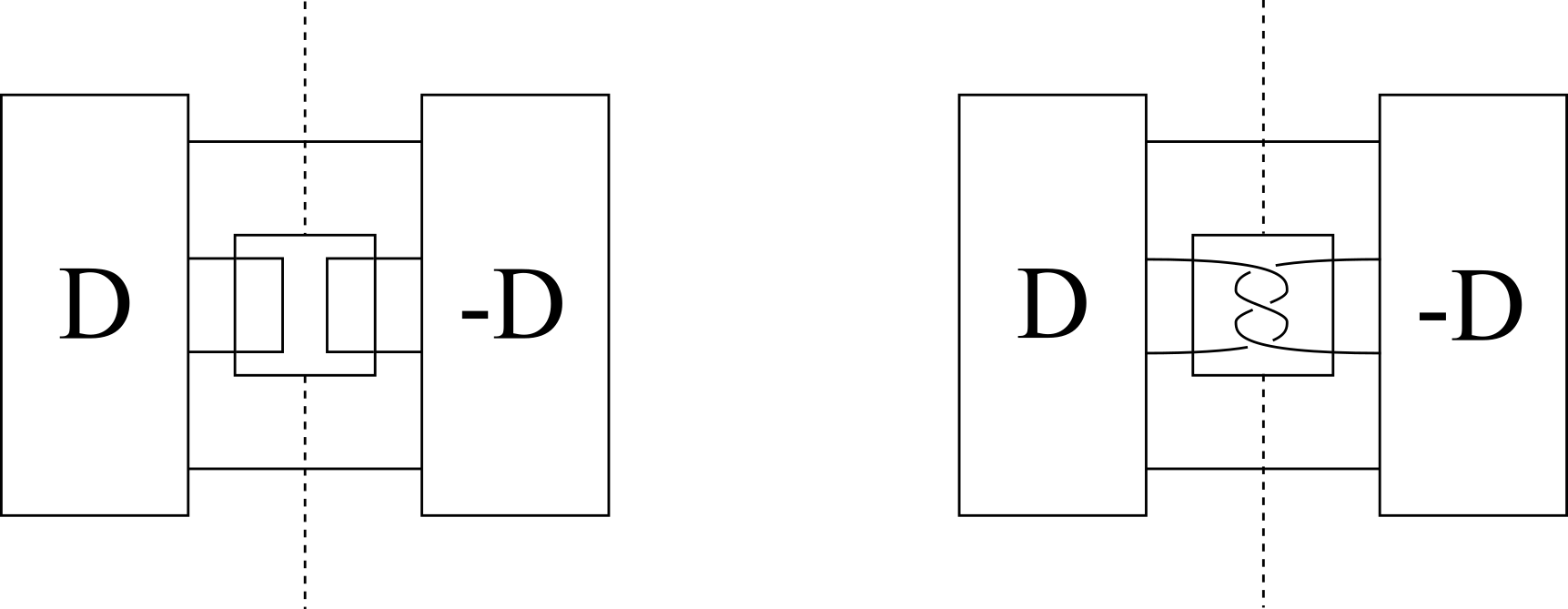}
    \caption{Schematic picture of the tangle replacement for $n=-3$}
    \end{figure}

Suppose $K$ is a symmetric union of $J$ with one twisting region. We will use the notation $(D \cup -D)(n)$ to represent $K$ where $n$ is the number of inserted crossings counted with sign. Let $B$ denote the ball containing the twisting region for the inserted crossings. $B$ meets $D \# -D$ in two trivial arcs representing a $\frac{1}{0}$-tangle, then replacing it by a $\frac{1}{n}$-tangle we obtain $K$ from $J \# -J$. See Figure 1.

$B$ lifts to a solid torus $\tilde{B}$ in the double branched cover $\Sigma_2(J\#-J)$. Then the tangle replacement corresponds to a $\frac{1}{n}$-Dehn surgery along $\tilde{B}$ giving us a description of $\Sigma_2(K)$ in terms of $\Sigma_2(J)$: $\Sigma_2(K) = \Sigma_2(J \# -J)(\frac{1}{n})$. Let $X$ denote the ball $S^3 \setminus \text{int } {B}$ and $\tilde{X}$ denote the double branched cover of $X$ branched along the tangle $T= K \cap  X$.  One can easily see that $\tilde{X}$ is $\Sigma_2(J\#-J) \setminus \text{int } \tilde{B}$ and hence, $\Sigma_2(K)$ is $\frac{1}{n}$-Dehn filling of $\tilde{X}$, $\tilde{X}(\frac{1}{n})$.

\section{Proof of Theorem}
We will prove the contrapositive of the statement of Theorem \ref{Theorem1.1}. We first show that $\tilde{X}$ is irreducible if $K$ is not of the form stated in Theorem \ref{Theorem1.1}. Having $\tilde{X}$ irreducible allows us to use the result of Gordon and Luecke on reducible fillings. We then use this result together with the structure given by the symmetry to conclude no non-trivial $\frac{1}{n}$-Dehn filling of $\tilde{X}$ yields a reducible manifold. Hence, for the remainder of the note we assume $K$ is a composite ribbon knot which is not of the form stated in Theorem \ref{Theorem1.1}.

\begin{definition}
Let $G$ be a finite group acting on a connected 3-manifold $M$. A subset $N$ of $M$ is \textit{G-equivariant} if for each element $g$ of $G$, either $g(N) \cap N = \emptyset$ or $g(N)=N$. 
\end{definition}

By default there is a $\mathbb{Z}_2$-action on $\tilde{X}$ for it is a double branched cover. This action is generated by the orientation-preserving involution $\iota$ whose fixed point set is the lift of the branching set. Since the restriction of $\tau$ on $X$ leaves $T$ invariant, it lifts to an orientation-reversing involution $\tilde{\tau}$ on $\tilde{X}$ whose fixed point set is the lift of the disk $\mathcal{D}$, where $\mathcal{D} = \mathcal{S} \cap X$.  Hence, we have $\mathbb{Z}_2 \times \mathbb{Z}_2$ acting on $\tilde{X}$. We will utilize this action to show $\tilde{X}$ is irreducible as a consequence of the equivariant sphere theorem due to \cite{MSY82} and \cite{Du85}.

\begin{theorem}(Equivariant sphere theorem)
Let $G$ be a finite group acting on a connected oriented 3-manifold $M$. If $\pi_2(M)$ is non-trivial, then there exists a $G$-equivariant essential sphere in $M$.
\end{theorem}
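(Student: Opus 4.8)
The plan is to follow the minimal-surface approach of Meeks--Simon--Yau, which converts the topological conclusion into a geometric one through area minimization so that the finiteness of $G$ can be exploited via the invariance of a suitable metric. First I would fix, once and for all, a $G$-invariant Riemannian metric on $M$; since $G$ is finite this is obtained by averaging an arbitrary metric over the group, making every element of $G$ act as an isometry. The purpose of this reduction is that any geometrically canonical object produced from the metric is permuted by $G$, and the essential spheres we seek will be produced by area minimization, which is such a canonical construction.

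The heart of the argument is the existence-and-regularity theorem for least-area spheres. Since $\pi_2(M) \neq 0$, there is an embedded least-area minimal $2$-sphere $\Sigma$ in $M$ representing a nontrivial class of $\pi_2(M)$; because $M$ is oriented the spherical (rather than $\mathbb{RP}^2$) case occurs, and $\Sigma$ is essential since it is minimized among non-nullhomotopic representatives. If $M$ has nonempty boundary or is noncompact, I would first pass to a compact region containing a fixed essential sphere and arrange the boundary to be mean-convex, or double along the boundary, so that minimization takes place in a setting where the machinery applies. The key structural input I would then invoke is the \emph{disjointness property}: two embedded least-area spheres in $M$ are either disjoint or coincide. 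This is established by a cut-and-paste argument together with the strong maximum principle for minimal surfaces: if two least-area spheres met transversally, exchanging pieces along the intersection curves would decrease area, contradicting minimality; and if they met tangentially the maximum principle would force them to agree.

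With these two inputs, equivariance is essentially automatic, and this is where the invariance of the metric pays off. For each $g \in G$ the image $g(\Sigma)$ is again an embedded least-area sphere, because $g$ is an isometry; hence by the disjointness property either $g(\Sigma) = \Sigma$ or $g(\Sigma) \cap \Sigma = \emptyset$. But this is precisely the defining condition for $\Sigma$ to be $G$-equivariant, so a single least-area representative $\Sigma$ is already the desired equivariant essential sphere, with no further orbit bookkeeping required.

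The main obstacle lies entirely in the second paragraph: the existence, embeddedness, and regularity of the least-area minimal sphere, together with the disjointness property, rest on nontrivial geometric measure theory --- varifold compactness, curvature estimates, and the maximum principle --- and these are exactly the contributions of the cited works. An alternative, metric-free route is the combinatorial proof via equivariant tracks in a $G$-invariant triangulation, which I would fall back on if one wished to avoid minimal-surface theory; but the minimal-surface argument is the cleanest way to see why finiteness of $G$ and invariance of the metric force equivariance of the minimizer.
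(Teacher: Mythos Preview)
The paper does not prove this theorem; it quotes it as a known result due to Meeks--Simon--Yau \cite{MSY82} and Dunwoody \cite{Du85} and uses it as a black box in the proof of Lemma~\ref{L3.3}. So there is no in-paper argument to compare against.

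Your sketch follows the first of the two cited approaches, and the combinatorial alternative you mention at the end is precisely the second. The outline is sound: averaging to obtain a $G$-invariant metric, producing an embedded least-area essential sphere, and then observing that any $g\in G$ sends a least-area sphere to a least-area sphere, so the cut-and-paste plus strong maximum principle dichotomy forces $g(\Sigma)=\Sigma$ or $g(\Sigma)\cap\Sigma=\emptyset$. One small imprecision: the clause ``because $M$ is oriented the spherical (rather than $\mathbb{RP}^2$) case occurs'' is not quite the right reason, since orientable $3$-manifolds can contain one-sided projective planes; the correct statement is that the Meeks--Simon--Yau minimization yields either an embedded sphere or a map factoring through a one-sided $\mathbb{RP}^2$, and in the latter case a further argument (or simply invoking the ordinary sphere theorem for existence and then minimizing within the isotopy class of an embedded essential sphere) is needed. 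You are right that the substantive analytic content---existence, embeddedness, regularity, and the disjointness property---is exactly what the cited references supply.
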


\begin{lemma}{\label{L3.3}}
$\tilde{X}$ is irreducible.
\end{lemma}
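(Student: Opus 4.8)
The plan is to argue by contradiction via the equivariant sphere theorem applied to the group $G=\mathbb{Z}_2\times\mathbb{Z}_2=\langle\iota,\tilde\tau\rangle$ acting on $\tilde X$. Suppose toward a contradiction that $\tilde X$ is reducible. Since $\tilde X$ is compact, orientable, and not $S^2\times S^1$ (its boundary is a torus), reducibility forces $\pi_2(\tilde X)\neq 0$, so the equivariant sphere theorem produces a $G$-equivariant essential $2$-sphere $S\subset\tilde X$. The whole strategy is to push $S$ down to the quotient ball $X=\tilde X/\iota$, read off the resulting sphere in the tangle $(X,T)$, and then invoke the mirror symmetry $\tau$ to force $K$ into the excluded form $K_1\#-K_1\#K_2$, contradicting the standing assumption.

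First I would analyze $S$ through the covering involution $\iota$, whose branched quotient is the ball $X$ with branch locus $T$. Equivariance gives two cases. If $\iota(S)\cap S=\emptyset$, then $S$ maps homeomorphically onto an embedded sphere $P\subset X$ disjoint from $T$; since $T$ is a tangle with no closed components, all four of its endpoints lie on $\partial X$, so the inner ball that $P$ bounds misses $T$ entirely and lifts to a ball bounded by $S$, contradicting that $S$ is essential. If instead $\iota(S)=S$, then $P:=S/\iota$ is an embedded closed surface in $X$ and $S\to P$ is a double cover branched over $P\cap T$; the Riemann--Hurwitz relation $\chi(S)=2\chi(P)-\lvert P\cap T\rvert$ with $\chi(S)=2$ forces $P$ to be a $2$-sphere meeting $T$ in exactly two points. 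The arc $T_1$ that $P$ cuts off on the side away from $\partial X$ must be knotted, for otherwise $(B_1,T_1)$ is a trivial ball--arc pair whose branched cover is a ball bounded by $S$. Thus $P$ exhibits a nontrivial local knot $K_1$, and because its enclosing ball lies in $X=S^3\setminus\text{int } B$ and therefore misses the twisting region, $K_1$ is a genuine connected summand of $K$ for every $n$.

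I would then bring in the mirror symmetry. As $S$ is $\tilde\tau$-equivariant and $\tilde\tau$ covers $\tau$, the sphere $P$ is $\tau$-equivariant. In the case $\tau(P)\cap P=\emptyset$, the involution $\tau$ carries the summand ball to a disjoint ball whose arc is the mirror of $T_1$, so $K$ factors as $K_1\#-K_1\#K_2$, which is precisely the form excluded by hypothesis; this yields the desired contradiction.

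The remaining and genuinely hardest case is $\tau(P)=P$, where the summand ball $B_1$ is $\tau$-invariant and straddles the fixed disk $\mathcal D$. Here I expect the main obstacle. I would use that $T\cap\mathcal D=\emptyset$ (the knot meets $\mathcal S$ only inside $B$) to conclude that $\tau$ interchanges the two points of $P\cap T$ and acts on $(B_1,T_1)$ as an orientation-reversing involution swapping the endpoints of $T_1$; the goal is then to show that such a symmetric local knot cannot arise from an \emph{essential} $S$, or that it again forces a $K_1\#-K_1$ factorization. I anticipate that closing this gap---ruling out a lone symmetric (negative-amphichiral) summand---is the delicate point, and that it requires exploiting the full $\tilde\tau$-action on $\tilde X$ rather than only its shadow $\tau$ on the base ball $X$.
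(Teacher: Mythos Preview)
Your overall architecture---equivariant sphere theorem for $G=\langle\iota,\tilde\tau\rangle$, then case analysis on how $\iota$ and $\tilde\tau$ act on $S$---is exactly the paper's approach, and your treatment of the cases $\iota(S)\cap S=\emptyset$, $\iota(S)=S$, and $\tilde\tau(S)\cap S=\emptyset$ matches the paper (your Riemann--Hurwitz count is a clean substitute for the paper's classification of involutions on $S^2$).

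The genuine gap is in the last case $\tau(P)=P$, and it stems from a factual error: you assert that $T\cap\mathcal D=\emptyset$, i.e.\ that the knot meets $\mathcal S$ only inside $B$. This is false. The tangle $T\subset X$ is $\tau$-invariant, and each of its two strings joins a point of $\partial B$ on the left to its mirror point on the right; hence each string meets the fixed disk $\mathcal D$ once, and $\lvert T\cap\mathcal D\rvert=2$, with the two points on \emph{distinct} strings. Because of this misreading you are led to expect a lone ``symmetric amphichiral'' summand and to look for an obstruction upstairs in $\tilde X$.

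The paper's resolution is in fact simpler and stays entirely downstairs. Since $\tau(P)=P$, the ball $B'$ bounded by $P$ is $\tau$-invariant, and $D':=B'\cap\mathcal D$ is a disk with $\partial D'=P\cap\mathcal D$. The two points of $P\cap T$ are swapped by $\tau$, so they lie on opposite sides of $\mathcal D$; hence the arc $T_1=B'\cap T$ crosses $\mathcal D$, and $D'$ meets $T$ in at least one point. It cannot meet $T$ in both points of $T\cap\mathcal D$ because those lie on different strings of $T$ while $T_1$ lies on a single string. Therefore $\lvert D'\cap T\rvert=1$, and $D'$ splits $(B',T_1)$ into two ball--arc pairs $(B'_l,T_1^l)$ and $(B'_r,T_1^r)$ with $\tau$ interchanging them. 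This exhibits $K'=K'_l\#(-K'_l)$ with $K'_l$ nontrivial, so $K$ again has the excluded form $K_1\#-K_1\#K_2$. That is the missing idea: the single transverse intersection $D'\cap T$ turns the symmetric local knot into a visible connected sum with its own mirror.
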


\begin{proof}
Assume for contradiction that $\tilde{X}$ is reducible, then there is a sphere in $\tilde{X}$ that does not bound a 3-ball. By the equivariant sphere theorem, we find an essential sphere $S$ in $\tilde{X}$ such that either $\iota(S) \cap S = \emptyset$ or $\iota(S)=S$, and either $\tilde{\tau}(S) \cap S = \emptyset$ or $\tilde{\tau}(S)=S$. 

If $\iota(S) \cap S = \emptyset$, then the spheres $S$ and $\iota(S)$ do not intersect $\tilde{T}$, and hence, they are mapped under the covering map $\rho$ to a sphere in $X$ that does not intersect $T$. This sphere bounds a 3-ball in $X$ which does not meet $T$ since $T \cup B$ is connected and which lifts to a disjoint union of 3-balls bounded by $S$ and $\iota(S)$, which contradicts the assumption that $S$ is essential in $\tilde{X}$. 

Therefore, $\iota$ maps $S$ to itself. Involutions on a sphere are known to be either the identity or conjugate to the antipodal map, or to rotation by $\pi$ around an axis, or to reflection across a plane. It cannot be the identity or conjugate to reflection across a plane, since the set of fixed points of $\iota$ is the tangle $\tilde{T}$ which is neither two dimensional nor has a component homeomorphic to $S^1$. If it was conjugate to the antipodal map, then the image of $S$ under the covering map would be an embedded $\mathbb{R}P^2$ in $X$, which is impossible. We then conclude that $\iota|_S$ is conjugate to rotation by $\pi$ around an axis with a fixed set $S^0$ which implies that $S$ intersects $\tilde{T}$ at exactly two points. 

So $\rho(S)$ is a sphere that also intersects $T$ at two points. Because $X$ is a ball, $\rho(S)$ bounds a ball $B^\prime$ in $X$ on one side. $B^\prime$ meets $T$ in a knotted arc; otherwise, it would lift to a 3-ball bounded by $S$, contradicting our assumption that $S$ is essential. Let $K^\prime$ denote the connected summand described by the arc $B^\prime \cap T$.

Suppose $\tilde{\tau}(S) \cap S = \emptyset$. Then $S$ does not intersect the lift of $\mathcal{D}$ and as a result $\rho(S)$ does not intersect $\mathcal{S}$. Thus, it is contained in either $B_l$ or $B_r$ and hence, so is $B^\prime$. By the symmetry, $\tau(\rho(S))$ is a sphere contained in the other side and the arc in which $\tau(B^\prime)$ meets $T$ describes the summand $-K^\prime$. Hence, $K$ has $K^\prime$ and $-K^\prime$ as connected summands. Replacing them by trivial arcs, we obtain a summand of $K$ that is a symmetric union with one twisting region. Hence, $K$ is of the form stated in Theorem \ref{Theorem1.1}, which contradicts the assumption on $K$.

Therefore, it must be that $\tilde{\tau}$ maps $S$ to itself. In this case,  $\tau$ also maps $\rho(S)$ to itself. Then, $\rho(S)$ intersects $\mathcal{D}$ in a circle which bounds the disk $D^\prime$, where $D^\prime = B^\prime \cap \mathcal{D}$. Since the endpoints of the arc $B^\prime \cap T$ are symmetric as $\rho(S)$ is fixed by $\tau$, $D^\prime$ and $T$ intersect non-trivially. There are only two points in $T \cap \mathcal{D}$ and $D^\prime$ cannot intersect $T$ in both of those points since they belong to distinct string components of $T$. Hence, we conclude that $D^\prime$ intersects $T$ in a single point. Then $D^\prime$ cuts $B^\prime$ into two balls $B^\prime_l$ and $B^\prime_r$, where $B^\prime_\alpha = B^\prime \cap B_\alpha$ for $\alpha=l,r$, each of which bounds a sphere that intersects $T$ at two points. Let $K^\prime_\alpha$ denote the connected summand described by the arc $B^\prime_\alpha \cap T$ for $\alpha=l,r$. Then we have $K^\prime = K^\prime_l \# K^\prime_r$. Since $\tau(B^\prime_l \cap T) = B^\prime_r \cap T$, $K^\prime_r = - K^\prime_l$. $K^\prime_l$ or $K^\prime_r$ is not trivial, otherwise; $K^\prime$ would also be trivial, which is a contradiction. Then $K^\prime$ is the connected sum of a non-trivial knot and its mirror image. The arc in which $S^3\setminus B^\prime$ intersects $K$ describes a summand that is a symmetric union with one twisting region and hence, we obtain $K$ to be of the form stated in Theorem \ref{Theorem1.1}, which is again a contradiction. 

Thus, we conclude that no essential sphere exists in $\tilde{X}$ and complete the proof. 

\end{proof}

Having $\tilde{X}$ irreducible in hand, we can now use the following result of Gordon and Luecke:
\begin{theorem}{\cite{GL96}}{\label{T3.4}}
Let $M$ be an orientable and irreducible 3-manifold with a torus boundary. If $M(\pi)$ and $M(\gamma)$ are reducible for distinct slopes $\pi$ and $\gamma$, then $\Delta(\pi,\gamma) = 1$.
\end{theorem}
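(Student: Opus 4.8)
The plan is to follow the intersection-graph method of Gordon and Luecke: assume $\Delta := \Delta(\pi,\gamma) \ge 2$ and derive a contradiction. Let $V_\pi, V_\gamma$ be the solid tori attached to $\partial M$ producing $M(\pi), M(\gamma)$, with cores $k_\pi, k_\gamma$. I would choose reducing spheres $\hat P \subset M(\pi)$ and $\hat Q \subset M(\gamma)$ and isotope each to intersect the core $k_\pi$ (resp.\ $k_\gamma$) transversely in the fewest possible points. Then $P := \hat P \cap M$ and $Q := \hat Q \cap M$ are properly embedded \emph{planar} surfaces whose boundaries are curves of slope $\pi$ and $\gamma$ respectively, with $n_P$ and $n_Q$ boundary components (equal to the number of core intersections). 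Since $M$ is irreducible, a core cannot be met zero times --- an unpunctured sphere would bound a ball in $M$ and hence in the filling, contradicting that the sphere is reducing --- so $n_P, n_Q \ge 1$; the degenerate case of a single puncture is disposed of separately, and by surgering along innermost circles using the irreducibility of $M$ I would arrange that $P \cap Q$ consists only of arcs.

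Next I would form the graphs of intersection $G_P \subset \hat P$ and $G_Q \subset \hat Q$, whose fat vertices are the meridian disks $\hat P \cap V_\pi$ (resp.\ $\hat Q \cap V_\gamma$) and whose edges are the arcs of $P \cap Q$ recorded on each sphere. Labelling each endpoint of an edge of $G_Q$ by the meridian disk of $V_\pi$ on which it lies (and dually for $G_P$), the boundary-intersection count shows that every vertex of $G_P$ is incident to exactly $n_Q \Delta$ endpoints and every vertex of $G_Q$ to exactly $n_P \Delta$ endpoints. With $\Delta \ge 2$ the graphs are therefore very edge-dense, and the parity rule --- dictating which label pairs may appear at the two ends of an edge, as read off from orientations along $\partial M$ --- tightly constrains how the labels cycle around each fat vertex.

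The crux is to convert this density into a \emph{Scharlemann cycle}: a cycle of edges in $G_P$ or $G_Q$ bounding a disk face of the graph, all of whose edges carry the same pair of labels. This is the technically heaviest step, and I expect it to be the main obstacle. It requires the great-web / extremal-vertex arguments together with the Euler-characteristic estimate for a graph drawn on the genus-zero surface $\hat P$ (the planarity being exactly what the reducing-sphere hypothesis supplies) to show that, once $\Delta \ge 2$, the prescribed valences and labelling cannot be realized without producing such a cycle; the low-complexity configurations that must be excluded along the way --- trivial loops and length-one cycles --- are ruled out precisely by the irreducibility of $M$ and the minimality of $|\hat P \cap k_\pi|$ and $|\hat Q \cap k_\gamma|$.

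Finally, a Scharlemann cycle in $G_Q$ on a label pair $\{i, i+1\}$ bounds a disk in $Q$ that, together with the $1$-handle of $V_\pi$ running between the meridian disks $i$ and $i+1$, assembles into a once-punctured lens space embedded in $M(\pi)$ (a punctured $\mathbb{R}P^3$ when the cycle has length two). By the Scharlemann cycle lemma this forces a lens-space connected summand whose existence is incompatible with $M$ being irreducible, giving the desired contradiction. Hence $\Delta \le 1$; and since $\pi \ne \gamma$ forces $\Delta \ge 1$, we conclude $\Delta(\pi,\gamma) = 1$.
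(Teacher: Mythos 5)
Before anything else, note that the paper does not prove this statement at all: Theorem~\ref{T3.4} is imported verbatim from Gordon and Luecke \cite{GL96} and used as a black box in the proof of Theorem~\ref{Theorem1.1}, so there is no in-paper argument to compare yours against. What you have written is instead an outline of the proof in \cite{GL96} itself, and as an outline of the opening moves it is faithful: minimal reducing spheres $\hat P$, $\hat Q$, the punctured-sphere surfaces $P$, $Q$ with boundary slopes $\pi$, $\gamma$, the observation that $n_P, n_Q \ge 1$ by irreducibility of $M$, the intersection graphs with vertex valences $n_Q\Delta$ and $n_P\Delta$, the parity rule, and the punctured-lens-space construction from a Scharlemann cycle are all set up correctly. (One small caveat: eliminating circle components of $P \cap Q$ uses the minimality of $n_P$ and $n_Q$ as well as irreducibility, since the punctured spheres need not be incompressible.) But the step you yourself flag --- extracting a Scharlemann cycle from $\Delta \ge 2$ --- is not a technical lemma that can be waved at: the great-web machinery and its combinatorics constitute essentially the entire content of \cite{GL96}, so in its current form your proposal asserts, rather than proves, the heart of the theorem.

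There is also a genuine logical error at the end, independent of the admitted gap. The once-punctured lens space you construct is embedded in $M(\pi)$, not in $M$, so its existence contradicts nothing: lens spaces are themselves irreducible, the summand does not live in $M$, and $M(\pi)$ is reducible by hypothesis, so it is perfectly entitled to carry a lens-space connected summand. A Scharlemann cycle yields an immediate contradiction only in settings where the filled manifold is known to have no such summand (e.g.\ when the filling is $S^3$, as in the knot complement problem); here the contradiction in \cite{GL96} must instead be played off against the \emph{minimality} of the chosen reducing spheres --- the cycle face and the $1$-handle of $V_\pi$ between the meridian disks labelled $i$ and $i+1$ are used to surger $\hat P$ into a sphere meeting the core $k_\pi$ fewer times, with homological input from the lens-space order needed to certify that the new sphere is still reducing --- together with the finer web structure when $\Delta \ge 2$. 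As written, your final sentence would make the argument collapse at the last step even if the great-web step were granted, so you should either reroute the endgame through the minimality of $n_P$ and $n_Q$ or, for the purposes of this paper, simply cite \cite{GL96} as the author does.
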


Now we are ready to prove Theorem \ref{Theorem1.1}.

\begin{figure}[ht]
    \centering
    \includegraphics[width=15cm]{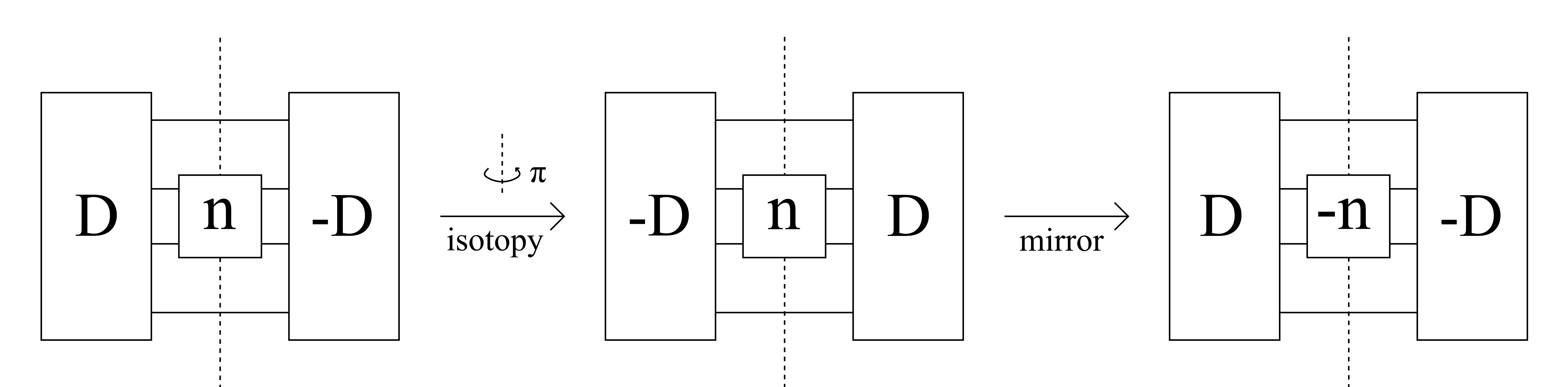}
    \caption{}
    \end{figure}

\begin{proof}[Proof of Theorem \ref{Theorem1.1}]
Assume that $K$ admits a symmetric union diagram  $(D \cup -D)(n)$ where $D$ is a diagram of a knot $J$. Then $\Sigma_2(K)$ is $\tilde{X}(\frac{1}{n})$. Since $K$ is composite, $\tilde{X}(\frac{1}{n})$ is reducible by {\cite{Wa69}}. We observe that $(D \cup -D)(-n)$ is the mirror of $(D \cup -D)(n)$, see Figure 2. Then having $\tilde{X}(\frac{1}{n})$ and $\tilde{X}(-\frac{1}{n})$ as an orientation-reserving homeomorphic pair, we know that if one of them is reducible, then the other is reducible as well. By Lemma \ref{L3.3} and Theorem \ref{T3.4} we have $1=\Delta(\frac{1}{n},-\frac{1}{n})=2|n|$, which is a contradiction.
\end{proof}

\bibliography{main}

\end{document}